\numberwithin{equation}{section}
\theoremstyle{plain}  
\newtheorem{thm}[equation]{Theorem}
\newtheorem*{main1}{Theorem \ref{main1}}
\newtheorem{prop}[equation]{Proposition}
\newtheorem{lemma}[equation]{Lemma}
\theoremstyle{definition}  
\newtheorem{defn}[equation]{Definition}
\newtheorem{remark}[equation]{Remark}
\newcommand{\ind}{\hspace{15 pt}}
\newcommand{\ZZ}{\mathbb{Z}}
\newcommand{\Z}{\mathbb{Z}}
\newcommand{\CC}{\mathbb{C}}
\newcommand{\ER}{\mathbb{E}}
\newcommand{\colim}{\text{colim }}
\newcommand{\cp}{\mathbb{C}P^\infty}
\newcommand{\Maps}{\text{Maps}}
\newcommand{\MU}{\mathbb {MU}}
\newcommand{\E}{\mathbb{E}}
\renewcommand{\sp}[2]{\underline{#1}_{#2}}
\DeclareMathOperator{\BOg}{BO}
\DeclareMathOperator{\BUg}{BU}
\DeclareMathOperator{\MO}{MO}
\DeclareMathOperator{\MUR}{MR}
\DeclareMathOperator{\MUg}{MU}
\begin{document}
\pagestyle{plain}

\title
{Multiplicative structure on Real Johnson-Wilson theory}
\author{Nitu Kitchloo}
\address{Department of Mathematics, Johns Hopkins University, Baltimore, USA}
\email{nitu@math.jhu.edu}
\author{Vitaly Lorman}
\address{Department of Mathematics, University of Rochester, Rochester, USA}
\email{vlorman@ur.rochester.edu}
\author{W. Stephen Wilson}
\address{Department of Mathematics, Johns Hopkins University, Baltimore, USA}
\email{wsw@math.jhu.edu}
\thanks{The first author is supported in part by the NSF through grant DMS
  1307875.}
\date{\today}


\maketitle

\section{Introduction}
At the prime 2, Johnson-Wilson theory $E(n)$ \cite{JW73} is a complex-oriented cohomology theory which has a $C_2$-equivariant refinement, $\E(n)$ as a genuine $C_2$-equivariant spectrum, where the action of $C_2$ stems from complex conjugation. This was first constructed in \cite{HK01}, and Real Johnson-Wilson theory $ER(n)$ is defined to be the $C_2$-fixed points of $\E(n)$. The underlying nonequivariant spectrum of $\E(n)$ is Johnson-Wilson theory $E(n)$, and it is a homotopy associative, commutative, and unital ring spectrum. The goal of this note is to investigate whether the same properties hold for $\E(n)$ and $ER(n)$. 
\medskip
\noindent

Interest in this problem comes from the fact that $ER(n)$ is quickly becoming a useful and computable cohomology theory. For $n=1$ and 2, it reproduces familiar cohomology theories, $ER(1)=KO_{(2)}$ and $ER(2)=TMF_0(3)$ (the latter after suitable completion, see \cite{HM16}). The $ER(n)$-cohomology of a large (and growing) collection of spaces has been computed: real projective spaces and their products \cite{KW08a, KW08b, Ban13} (for $n=2$), complex projective spaces \cite{Lor16, KLW16b}, $\text{BO}$ and some of its connective covers \cite{KW14, KLW16b}, and half of all Eilenberg MacLane spaces \cite{KLW16a, KLW16b}. Furthermore, these computations have applications. In \cite{KW08a, KW08b}, the first and third authors used computations in $ER(2)$-cohomology to prove new nonimmersion results for real projective spaces.
\medskip
\noindent

The existence of a multiplicative structure on Real Johnson-Wilson theory has been suggested in a comment in \cite{HK01} (Comment 5 following the proof of Theorem 2.28) which claims that $\E(n)^{\star}(\E(n)\wedge\E(n))$ may be calculated and from this it may be shown that $\E(n)$ is a (homotopy) associative, commutative, and unital ring spectrum. The results in this note were born in the attempt to verify the above claim. Unfortunately, we were unsuccessful in doing so. However, we show that $\E(n)$ represents an $\MU$-algebra which is homotopy unital, associative, and commutative \emph{up to phantom maps}. By a phantom map, we mean a map $f: X \longrightarrow Y$ which has trivial restriction to any finite CW complex mapping into $X$. In addition, we show that the $\E(n)$-cohomology of an equivariant topological space is canonically a commutative ring. 

\medskip
\begin{thm}\label{main1}
$\E(n)$ is a homotopy commutative, homotopy associative, unital $\MU$-algebra up to phantom maps. In other words, there exist unit and multiplication maps:
\[ 1: \MU \longrightarrow \E(n), \quad \quad \hat{\mu} : \E(n) \wedge \E(n) \longrightarrow \E(n), \]
such that all the obstructions to $\hat{\mu}$ being a homotopy associative and homotopy commutative $\MU$-algebra structure are phantom maps. Differently said, all the corresponding structure diagrams commute up to phantom maps. Furthermore, the forgetful map:
\[ \rho : \E(n)^0(\E(n) \wedge \E(n)) \longrightarrow E(n)^0(E(n) \wedge E(n)), \]
maps $\hat{\mu}$ to the canonical product $\mu$ on the non-equivariant Johnson-Wilson spectrum $E(n)$. 
\end{thm}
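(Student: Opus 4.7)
The overall strategy is to construct $\hat\mu$ and verify its properties using a Milnor $\lim^{1}$ exact sequence, by expressing $\E(n)$ as a homotopy colimit of $\MU$-modules. Specifically, $\E(n)$ arises from the Real truncated Brown-Peterson spectrum $\BPn{n}$ by inverting the Real generator $\hat{v}_n$, giving
\[ \E(n) \simeq \operatorname{hocolim}\bigl( \BPn{n} \xrightarrow{\hat v_n} \Sigma^{-d}\BPn{n} \xrightarrow{\hat v_n} \Sigma^{-2d}\BPn{n} \to \cdots \bigr) \]
in $\MU$-modules, where $d = |\hat v_n|$. The unit $1 \colon \MU \to \E(n)$ is then the canonical composition $\MU \to \BP \to \BPn{n} \to \E(n)$ coming from the Hu-Kriz construction.

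For the multiplication, write $X_k = \Sigma^{-kd}\BPn{n}$, so that $\E(n)\wedge_{\MU}\E(n) \simeq \operatorname{hocolim}_k (X_k \wedge_{\MU} X_k)$ by cofinality of the diagonal in $\mathbb{N}\times\mathbb{N}$. This yields the Milnor short exact sequence
\[ 0 \to {\lim_k}^{1}\, [X_k \wedge_{\MU} X_k, \Sigma^{-1}\E(n)] \longrightarrow [\E(n)\wedge_{\MU}\E(n), \E(n)] \longrightarrow \lim_k\, [X_k \wedge_{\MU} X_k, \E(n)] \to 0. \]
Using the multiplication on $\BPn{n}$ as an $\MU$-module, we obtain a compatible family $X_k \wedge_{\MU} X_k \to X_{2k} \to \E(n)$, defining an element in the $\lim$ term; any lift to the middle term defines $\hat\mu$. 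The same Milnor sequence technique, applied to $\E(n)^{\wedge 3}$ and $\E(n)$ (for associativity, commutativity, and unitality, respectively), shows that the relevant obstructions vanish in the $\lim$ term, since they vanish on the nose at each finite stage where $\BPn{n}$ is commutative and associative. Hence each obstruction lies in a $\lim^{1}$ term, and is therefore a phantom map essentially by definition: any class in $\lim^{1}$ is trivial on finite subcomplexes, because any map from a finite complex into the hocolim factors through some finite stage $X_k$.

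For the compatibility with the nonequivariant $E(n)$, the analogous construction carried out nonequivariantly gives the standard tower $E(n) \simeq \operatorname{hocolim} \Sigma^{-kd}BP\langle n\rangle$, and the forgetful map $\rho$ is induced levelwise on the Milnor sequences. The image of $\hat\mu$ in the nonequivariant $\lim$ term is manifestly $\mu$; since $E(n)$ admits an honest ring structure, the corresponding $\lim^{1}$ ambiguity is absorbed, and one concludes $\rho(\hat\mu) = \mu$. The main technical obstacle will be producing and controlling the $\MU$-algebra structure on $\BPn{n}$ at the finite-stage level in the equivariant setting, which requires an equivariant Landweber- or Baas-Sullivan-type argument in the $RO(C_2)$-graded context. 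The delicate nature of this structure — in particular, that the $\MU$-module maps constructed at each finite stage need not be strictly multiplicative and need not be compatible with the tower on the nose — is precisely what forces us to settle for an algebra structure up to phantom maps rather than one holding strictly.
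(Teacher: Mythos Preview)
Your proposal has a genuine gap that undermines the argument. You rely on ``the multiplication on $\BPn{n}$ as an $\MU$-module'' to produce the compatible family $X_k \wedge_{\MU} X_k \to X_{2k}$, and you yourself flag that ``producing and controlling the $\MU$-algebra structure on $\BPn{n}$ at the finite-stage level in the equivariant setting'' is the main technical obstacle. But this is not merely a technicality: constructing an equivariant multiplication on Real $\BPn{n}$ is essentially as hard as constructing one on $\E(n)$ itself, which is exactly the problem the theorem is meant to address. Without that input, you have no element in the $\lim$ term to lift, and the Milnor sequence gives you nothing. The proposal is thus circular.

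The paper avoids this circularity by a different choice of filtration. Rather than writing $\E(n)$ as a colimit of shifts of $\BPn{n}$, it uses the $v_n$-periodicity of the zero space to write
\[
\E(n)^{\wedge k} = \colim_m \Sigma^{-mk(2^n-1)(1+\alpha)} \underline{\E(n)}_0^{\wedge k}.
\]
The resulting $\lim^0$ term is $\lim_m \E(n)^0(\underline{\E(n)}_0^{\wedge k})$, and the key input is the \emph{weak projective property} of the spaces $\underline{\E(n)}_0^{\wedge k}$ (established in earlier work of the authors), which gives an isomorphism
\[
\rho: \E(n)^0(\underline{\E(n)}_0^{\wedge k}) \xrightarrow{\ \cong\ } E(n)^0(\underline{E(n)}_0^{\wedge k}).
\]
Combined with the vanishing of the nonequivariant $\lim^1$ (an evenness argument), this identifies the $\lim^0$ term with $E(n)^0(E(n)^{\wedge k})$, where the canonical multiplication $\mu$ already lives. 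One then defines $\hat\mu$ as \emph{any} lift of $\mu$ along $\rho$; no equivariant multiplication at any finite stage is ever needed. The obstructions to associativity, commutativity, and unitality then automatically lie in $\ker\rho$, which is the $\lim^1$ term and hence consists of phantoms.

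In short: your Milnor-sequence strategy is the right shape, but you filtered by the wrong spectra. Filtering by suspension spectra of zero spaces, together with the weak projective property, is what lets the paper import the nonequivariant ring structure rather than having to build an equivariant one from scratch.
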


\medskip

Theorem \ref{main1} tells us that the Real Johnson-Wilson theory is valued in commutative rings when applied to \emph{finite} CW complexes. Our second result extends this to the category of all spaces.
\medskip

\begin{thm}\label{main2}
With any choice of multiplication $\hat{\mu}$ as above, the spectrum $\E(n)$ represents a multiplicative cohomology theory on the category of $C_2$-spaces valued in (bigraded) commutative rings. There are natural transformations of ring-valued cohomology theories $\MU^\star(-) \longrightarrow \E(n)^\star(-)$ and $\E(n)^\star(-) \longrightarrow E(n)^*(-)$.
\end{thm}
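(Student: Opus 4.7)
The plan is to define the product on $\E(n)^\star(X)$ via $\hat{\mu}$ and the diagonal, verify the ring axioms on finite $C_2$-CW subcomplexes where Theorem \ref{main1} gives strict control, and extend to arbitrary $C_2$-spaces via the Milnor exact sequence.

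For a $C_2$-space $X$ and classes $\alpha, \beta \in \E(n)^\star(X)$ represented by maps $\alpha, \beta : X \to \E(n)$ (after suitable representation-sphere shifts), I would define
\[
\alpha \cdot \beta := \hat{\mu} \circ (\alpha \wedge \beta) \circ \Delta_X,
\]
and take the unit $1 \in \E(n)^0(X)$ to be pulled back from the $\MU$-algebra unit from Theorem \ref{main1}. Naturality in $X$ is immediate. For $X$ a finite $C_2$-CW complex, the associator $(\alpha \beta) \gamma - \alpha (\beta \gamma)$ equals the composite of the iterated diagonal $X \to X^{\wedge 3}$, the smash $\alpha \wedge \beta \wedge \gamma : X^{\wedge 3} \to \E(n)^{\wedge 3}$, and the phantom map $\hat{\mu}(\hat{\mu} \wedge 1) - \hat{\mu}(1 \wedge \hat{\mu})$ supplied by Theorem \ref{main1}. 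Since $X^{\wedge 3}$ is a finite complex, this composite vanishes by the definition of phantom map. Analogous arguments using the commutator and unit-axiom obstructions yield commutativity and unitality, so $\E(n)^\star(X)$ is a bigraded commutative ring for every finite $C_2$-CW complex $X$.

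To extend to a general $C_2$-space $X$, present $X$ as the filtered colimit of its finite $C_2$-CW subcomplexes $X_\nu$. By naturality, the product is compatible with restriction along $X_\nu \hookrightarrow X$, so it descends to the inverse limit of rings $\lim_\nu \E(n)^\star(X_\nu)$. Pulling this structure back through the Milnor short exact sequence
\[
0 \longrightarrow \lim{}^1_\nu \E(n)^{\star - 1}(X_\nu) \longrightarrow \E(n)^\star(X) \longrightarrow \lim_\nu \E(n)^\star(X_\nu) \longrightarrow 0
\]
gives the desired ring structure on $\E(n)^\star(X)$. The main obstacle is the $\lim^1$ term: the obstruction classes for associativity and commutativity on $\E(n)^\star(X)$ restrict to zero on every $X_\nu$ and hence lie in the $\lim^1$ kernel, and one must show they genuinely vanish despite the phantom ambiguity intrinsic to $\hat{\mu}$. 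I would attack this by exploiting the $\MU$-algebra compatibility, using the natural transformation from the honestly ring-valued theory $\MU^\star(-)$ to rigidify the product on the phantom part. This transformation $\MU^\star(-) \to \E(n)^\star(-)$ comes from the unit $\MU \to \E(n)$ and is a ring map because $\hat{\mu}$ is an $\MU$-algebra multiplication, while the transformation $\E(n)^\star(-) \to E(n)^*(-)$ is induced by the forgetful map $\rho$ and is a ring map by the last assertion of Theorem \ref{main1}.
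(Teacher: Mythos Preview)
Your argument has a genuine gap at the passage from finite to arbitrary $C_2$-spaces. You correctly observe that the associator and commutator on $\E(n)^\star(X)$ restrict to zero on every finite subcomplex $X_\nu$ and hence lie in the $\lim^1$ kernel, but you give no argument that they actually vanish there. Your proposed fix via the $\MU$-algebra compatibility does not work: a ring map \emph{from} $\MU^\star(-)$ places no constraint on the associator in the target theory, and the ring map \emph{to} $E(n)^*(-)$ is useless precisely because its kernel is the phantom subgroup. Indeed, the paper explicitly notes that for $n>1$ the relevant phantom groups are not known to vanish, so one should not expect a purely formal $\lim^1$ argument to succeed.

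The paper bypasses the stable phantom problem entirely by working unstably. The key point (Lemma \ref{zerospace}) is that the zero spaces $\underline{\E(n)}_0^{\times k}$ have the weak projective property, so $\rho: \E(n)^0(\underline{\E(n)}_0^{\times k}) \to E(n)^0(\underline{E(n)}_0^{\times k})$ is an \emph{isomorphism}; this lets one lift the honest $H$-ring structure on $\underline{E(n)}_0$ to an honest (not merely up-to-phantom) equivariant $H$-ring structure $\hat{\mu}_0$ on $\underline{\E(n)}_0$. Given classes $f \in \E(n)^V(X)$ and $g \in \E(n)^W(X)$ for an arbitrary space $X$, one multiplies by suitable powers of $v_n$ so that both classes are represented by maps of spaces into $\underline{\E(n)}_0$, forms the product there using $\hat{\mu}_0$, and then divides back by the appropriate power of $v_n$. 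Associativity, commutativity, and unitality follow directly from the corresponding properties of $\hat{\mu}_0$, with no Milnor sequence or $\lim^1$ analysis required. The compatibility with $\MU[v_n^{-1}]^\star(-)$ is then checked on zero spaces, again via the weak projective property.
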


The results of this document justify the assumption of commutativity in the computations of the $ER(n)$-cohomology of topological spaces made by the authors in previous work. We conclude by revisiting a result of the first and third authors concerning the $MR(n)$-orientation of $\MO[2^{n+1}]$ to correct an error in the proof of \cite[Theorem 1.4]{KW14}.

In the course of proving Theorem \ref{main2} we show that the infinite loop space underlying $ER(n)$, $\underline{ER(n)}_0$, is a homotopy commutative, associative, and unital $H$-ring space (Lemma \ref{zerospace}). Applying the Bousfield-Kuhn functor shows that the $K(n)$-localization, $L_{K(n)}ER(n)$, is in fact a homotopy commutative, associative and unital ring spectrum (not just up to phantom maps). The authors are grateful to Tyler Lawson for pointing this out. 

Hahn and Shi \cite{HS17} have recently proved that Johnson-Wilson theory $E(n)$ admits an $A_\infty$ ring structure for which the $C_2$-action is given by an $A_\infty$ involution (but not necessarily an $A_\infty$ ring automorphism). The question of (homotopy) commutativity is not readily addressed by their techniques. They show that these issues resolve on $K(n)$-localization giving rise to an $E_\infty$ ring spectrum structure on $L_{K(n)}ER(n)$. 

The present paper shows a (homotopy) commutative and associative ring structure up to
phantom maps, but to the authors' present knowledge, ER(n) is not (yet) known to be a
homotopy commutative and associative ring spectrum.

\medskip
\noindent
The authors would like to thank Neil Strickland for helpful discussion related to this document and the referee for diligent and valuable comments.

\section{Background}

\medskip

In this section, we recall a few background definitions and theorems from \cite{KW07b} and \cite{KLW16a} that we use in subsequent sections.

\medskip
\noindent
A genuine $C_2$-equivariant spectrum $\E$ is a family of $C_2$-spaces $\underline{\E}_{a+b\alpha}$, indexed over elements $a+b\alpha \in RO(C_2)$ where $\alpha$ denotes the sign representation, together with a compatible system of equivariant homeomorphisms
$$\xymatrix{ \underline{\E}_{a-r+(b-s)\alpha} \ar@{->}[r]^-{\simeq} & \Omega^{r+s\alpha} \underline{\E}_{a+b\alpha}}$$
where the right hand side denotes the space of pointed maps (endowed with the conjugation action) from the one point compactification of the representation $r+s\alpha$. The reader may refer to \cite{HK01} for more details on $C_2$-spectra. We will denote by $ER$ the homotopy fixed
point spectrum of the $C_2$-action on $\E$ and by $E$ the underlying nonequivariant spectrum given by forgetting the $C_2$-action. An example of a $C_2$- spectrum of interest to us is $\MU$, whose underlying nonequivariant spectrum is complex cobordism, $\MUg$, studied first by Landweber \cite{Lan68}, Fujii \cite{Fuj}, Araki and Murayama \cite{AM78}, and more recently by Hu-Kriz \cite{HK01}. The action of $C_2$ is induced by the complex conjugation action on the pre-spectrum representing $\MU$ in the usual way (see, e.g. \cite{HK01}). 

\medskip
\noindent
The $p=2$ Johnson-Wilson theory $E(n)$
lifts to a $C_2$-spectrum, $\E(n)$, defined as an $\MU$-module by coning off certain equivariant lifts of the Araki generators $v_i$ for $i > n$, and then inverting the lift of $v_n$. We shall call these equivariant lifts by the same names, $v_i$. The Real Johnson-Wilson theories, $ER(n)$, are defined as the fixed points of $\E(n)$.
\medskip
\noindent

Working with cohomological grading, let $Y$ be a $C_2$-space and let $\ER^{\ast(1+\alpha)}(Y)$ denote the subgroup of diagonal elements in the equivariant $\ER$-cohomology of $Y$ i.e. 
$$\ER^{\ast(1+\alpha)}(Y) := \pi_0\Maps^{C_2}(Y,\underline{\ER}_{\ast(1+\alpha)}).$$
 Consider the group homomorphism given by forgetting the $C_2$-action: 
\[ \rho : \ER^{\ast(1+\alpha)}(Y) \longrightarrow E^{\ast}(Y). \]
Notice that the image of $\rho$ belongs to the graded sub-group of elements in even degree. We recall the following definitions from \cite{KW07b}.

\medskip
\begin{defn} A $C_2$-space $Y$ is said to have the weak projective property with respect to a $C_2$-spectrum $\ER$ if the map
\[ \rho : \ER^{\ast(1+\alpha)}(Y) \longrightarrow E^{2\ast}(Y), \]
is an isomorphism of graded abelian groups.
\end{defn}

Note that in the case that the $C_2$-spectrum $\E$ is an $\MU$-module spectrum, the map $\rho$ in the definition above is a map of $\MU^{*(1+\alpha)}\cong MU^{2*}$-modules by virtue of the fact that the $\MU$-module structure on $\E$ forgets to an $MU$-module structure on $E$.

We will make extensive use of spaces with the weak projective property. In order to recognize such spaces, we need some auxiliary definitions.

\medskip
\begin{defn} A pointed $C_2$-space $X$ is said to be projective if 
\begin{enumerate}
\item $H_*(X; \ZZ)$ is of finite type. 
\item $X$ is homeomorphic to $\bigvee_I ({\cp})^{\wedge k_I}$ 
for some weakly increasing sequence of integers $k_I$, 
with the $C_2$ action given by complex conjugation. 
\end{enumerate}
\end{defn}

\medskip
\noindent
By a $C_2$-equivariant $H$-space, we shall mean an $H$-space whose multiplication map is $C_2$-equivariant.

\medskip
\begin{defn} A $C_2$-equivariant $H$-space $Y$ is said to have the projective 
property if there exists a projective space $X$, along 
with a pointed $C_2$-equivariant map $f : X \longrightarrow Y$, 
such that $H_*(Y;\ZZ/2)$ is generated as an algebra by the image of $f$.
\end{defn} 

The following theorem, proved in \cite[Theorem 2.6]{KLW16a}, establishes that spaces with the projective property have the weak projective property with respect to certain $\MU$-module spectra (in particular, $\ER(n)$).

\medskip
\begin{thm} \label{thm:weakproj}
Let $Y$ be a $C_2$-equivariant $H$-space with the projective property. Let $\ER$ denote any complete $\MU$-module spectrum with underlying spectrum $E$, satisfying the property that the forgetful map: $\rho^\ast : \ER^{\ast(1+\alpha)} \longrightarrow E^\ast$, is an isomorphism. Then the space $Y$ has the weak-projective property with respect to $\ER$. In other words, the following map is an isomorphism of $\MU^{\ast(1+\alpha)}$-modules: 
\[ \rho : \ER^{\ast(1+\alpha)}(Y) \longrightarrow E^{\ast}(Y). \]
\end{thm}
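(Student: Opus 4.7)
The plan is a two-step reduction: first establish the claim for projective spaces, then bootstrap to $H$-spaces with the projective property via the multiplicative structure. Since $\ER$ is a complete $\MU$-module and $\MU$ is Real-oriented, $\ER$ inherits a Real orientation, giving a class $x \in \ER^{1+\alpha}(\cp)$ whose forgetful image $\rho(x)$ is the standard nonequivariant orientation in $E^2(\cp)$. A cellular argument along the filtration $\mathbb{C}P^n \subset \cp$, together with completeness of $\ER$, identifies $\ER^{\ast(1+\alpha)}(\cp) \cong \ER^{\ast(1+\alpha)}[[x]]$ and analogously $E^{2\ast}(\cp) \cong E^{2\ast}[[x]]$. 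Because $\rho$ is a coefficient isomorphism by hypothesis, it is an isomorphism on $\cp$ as well. An iteration over smash powers $(\cp)^{\wedge k}$ via Künneth-style manipulations in power series rings, followed by passage to the wedge $X$ as an inverse limit of finite sub-wedges (justified by completeness of $\ER$ and the finite-type hypothesis on $H_\ast(X;\ZZ)$), gives the weak projective property for every projective $C_2$-space.

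The second step exploits the $H$-space multiplication on $Y$. Composing $f$ with iterates of the multiplication produces $C_2$-equivariant maps $X^{\wedge n} \to Y$, which assemble into a map $\tilde{f}: J(X) \to Y$ out of a James- or bar-type construction $J(X)$ filtered by smash powers of $X$. The generating hypothesis on $H_\ast(Y;\ZZ/2)$ as an algebra under the Pontryagin product forces $\tilde{f}$ to be a surjection in mod-2 homology. Because every filtration quotient of $J(X)$ is projective, the first step applies along this filtration and yields the weak projective property for $J(X)$. Comparing the Atiyah--Hirzebruch spectral sequences converging to $\ER^{\ast(1+\alpha)}(Y)$ and $E^\ast(Y)$ respectively, and using the $\MU^{\ast(1+\alpha)}$-module structure coming from the Real orientation to match classes pulled back from $J(X)$ along $\tilde{f}$, one deduces that $\rho$ is an isomorphism on $E_2$ and consequently on $E_\infty$.

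The main obstacle is bridging the mod-2 homological hypothesis on $Y$ with an isomorphism statement for the integrally-graded theory $\ER^{\ast(1+\alpha)}$. The Real orientation supplies integral lifts of mod-2 generators, via pullback along $f$ combined with the multiplication on $Y$, but ensuring that these lifts detect all of $\ER^{\ast(1+\alpha)}(Y)$ requires carefully tracking the $\MU^{\ast(1+\alpha)}$-module structure under the spectral-sequence comparison. The completeness hypothesis on $\ER$ is what permits us to transfer conclusions established on finite or projective sub-constructions to the potentially infinite $Y$, by guaranteeing convergence of the relevant inverse limits and spectral sequences.
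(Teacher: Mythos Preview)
The paper does not prove this theorem here; it is quoted as \cite[Theorem 2.6]{KLW16a}, so there is no in-paper proof to compare your attempt against. Your two-step outline---first handle projective spaces via the Real orientation and completeness, then bootstrap to $H$-spaces with the projective property using the multiplication and the algebra-generation hypothesis---is the natural shape such an argument takes and is presumably close in spirit to what the cited reference does.

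That said, your second step has a genuine gap. You invoke ``Atiyah--Hirzebruch spectral sequences converging to $\ER^{\ast(1+\alpha)}(Y)$ and $E^\ast(Y)$'' and claim $\rho$ is an isomorphism on $E_2$, but there is no off-the-shelf AHSS for the $RO(C_2)$-graded group $\ER^{\ast(1+\alpha)}(Y)$ with $E_2 = H^\ast(Y;\ER^{\ast(1+\alpha)})$. An equivariant cellular filtration of a $C_2$-space produces a spectral sequence whose $E_1$ or $E_2$ involves Bredon cohomology or the specific representation-cell structure, and identifying that with the nonequivariant AHSS for $E^\ast(Y)$ is essentially the content of the theorem, not a tool one may freely assume. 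Furthermore, the mod-$2$ homology surjectivity of $\tilde f$ does not on its own yield any injectivity for $\rho$ on $Y$; you need a concrete mechanism---for example, exhibiting $\ER^{\ast(1+\alpha)}(Y)$ as a free $\ER^{\ast(1+\alpha)}$-module on generators that $\rho$ carries to a basis of $E^{2\ast}(Y)$---and that is exactly what you flag as ``the main obstacle'' without resolving. Step~1 is fine in outline; step~2 as written is a plan, not a proof.
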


\medskip
\begin{remark}\label{smashproj}
The smash product of a finite collection of spaces with the projective property is an example of a space that has the weak projective property with respect to any $\E$ as in Theorem \ref{thm:weakproj}, but not the projective property (since it is not an $H$-space). This follows from writing $Y_1 \wedge Y_2=(Y_1 \times Y_2)/(Y_1 \vee Y_2)$ and the Five Lemma.
\end{remark}

\medskip
\noindent
There are many examples of spaces with the projective property. The ones of interest in this document will be $\underline{\ER(n)}_0$ and its products. 

\medskip
\begin{lemma}\label{zeroproj} $\underline{\E(n)}_0^{\times j} \times \underline{\MU}_{i(2^n-1)(1+\alpha)}^{\times s}$ has the weak projective property with respect to $\E(n)$ for all $i, j, s \geq 1$.
\end{lemma}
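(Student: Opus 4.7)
The plan is to reduce the statement to showing that each of the two factor types, $\underline{\E(n)}_0$ and $\underline{\MU}_{i(2^n-1)(1+\alpha)}$, individually has the projective property as a $C_2$-equivariant $H$-space, and then apply Theorem \ref{thm:weakproj}.

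First I would prove the general principle that a finite cartesian product of $C_2$-equivariant $H$-spaces with the projective property again has the projective property. The product inherits a $C_2$-equivariant $H$-space structure componentwise. If $f_i : X_i \longrightarrow Y_i$ realizes the projective property for each $Y_i$, then the wedge $\bigvee_i X_i$ is again a projective space in the sense of the definition (a wedge of smash products of copies of $\cp$), and the combined map $\bigvee_i X_i \longrightarrow \prod_i Y_i$, which sends $X_i$ into the $i$-th factor with the basepoint in the other factors, has image generating $H_\ast(\prod_i Y_i; \ZZ/2) \cong \bigotimes_i H_\ast(Y_i; \ZZ/2)$ as an algebra, by K\"unneth and the hypothesis on each $f_i$. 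Note that this is the product analogue, and cleaner than the smash-product situation of Remark \ref{smashproj} precisely because the target remains an $H$-space.

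It then suffices to exhibit the projective property for each of the two types of factors. For $\underline{\E(n)}_0$, this is recorded in the paragraph immediately preceding the lemma. For $\underline{\MU}_{k(1+\alpha)}$ with $k = i(2^n-1) \ge 1$, one uses the Real orientation of $\MU$: the canonical equivariant map $\cp \longrightarrow \underline{\MU}_{1+\alpha}$, together with the multiplicative $H$-space structure on $\MU_\star$ induced by the ring spectrum structure on $\MU$, yields equivariant maps $(\cp)^{\wedge k} \longrightarrow \underline{\MU}_{k(1+\alpha)}$ for each $k \ge 1$. The classical computation of $H_\ast\MU$ (in its Landweber/Milnor form, together with its lift to the $H$-space $\underline{\MU}_{k(1+\alpha)}$) shows that these images generate the mod 2 homology of the target as an algebra.

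The main obstacle is this last point: identifying a large enough supply of classes coming from $(\cp)^{\wedge k}$ to generate $H_\ast(\underline{\MU}_{k(1+\alpha)}; \ZZ/2)$ as an algebra at the specific equivariant level $k(1+\alpha)$. I would expect the authors either to invoke a structural result from \cite{KLW16a} or \cite{KW07b} concerning the $C_2$-homology of spaces in the $\MU$-spectrum, or to derive it in a few lines from Milnor's polynomial presentation of $H_\ast\MU$ and the action of the Thom class. Once both factor cases are established, combining them via the product construction above and applying Theorem \ref{thm:weakproj} yields the weak projective property of $\underline{\E(n)}_0^{\times j} \times \underline{\MU}_{i(2^n-1)(1+\alpha)}^{\times s}$ with respect to $\E(n)$, as claimed.
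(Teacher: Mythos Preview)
Your approach is essentially the same as the paper's: establish that each factor $\underline{\E(n)}_0$ and $\underline{\MU}_{i(2^n-1)(1+\alpha)}$ has the projective property, then pass to the finite product and invoke Theorem~\ref{thm:weakproj}. The paper dispatches what you correctly flag as the ``main obstacle'' by a direct citation to \cite{KW13} (Theorem 1--4 there) for both factor types rather than sketching the $H_\ast(\underline{\MU}_{k(1+\alpha)};\ZZ/2)$ argument; your product argument is in fact slightly cleaner than the paper's, which phrases the passage to products via a restricted-product/colimit step that is not really needed for finite $j,s$.
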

\begin{proof} By \cite[Theorem 1-4]{KW13}, the spaces $\sp{\E(n)}{0}$ and $\sp{\MU}{(i(2^n-1)(1+\alpha)}$ have the projective property.  Thus $\sp{\E(n)}{0}^{\times j} \times \underline{\MU}_{i(2^n-1)(1+\alpha)}^{\times s}$ is a restricted product of a family of spaces with the projective property (i.e. it is the colimit of finite products of spaces with projective property). A finite product of spaces with projective property evidently has the weak projective property, and since $\rho: \E(n)^{*(1+\alpha)}(-) \longrightarrow E(n)^{2*}(-)$ is an isomorphism at each stage, it follows that it is an isomorphism in the limit.
\end{proof}

\section{A stable multiplicative structure}
\medskip

We begin with the observation that the spaces $\underline{\E(n)}_0$ are $|v_n|=(2^n-1)(1+\alpha)$-periodic. That is, the adjoint of the multiplication-by-$v_n$ map on the spectrum $\E(n)$ induces, on the 0-space level, an equivalence
$$\xymatrix{
\sp{\E(n)}{0} \ar@{->}[r]^-{\simeq} & \Omega^{|v_n|}\sp{\E(n)}{0}
}$$
 We now express the spectrum $\E(n)$, and its products as colimits of shifted suspension spectra
\[ \E(n) = \colim_m \Sigma^{-m(2^n-1)(1+\alpha)} \underline{\E(n)}_0, \quad \E(n)^{\wedge k} = \colim_m \Sigma^{-mk(2^n-1)(1+\alpha)} \underline{\E(n)}_0^{\wedge k},  \]
where the maps are given by successive $v_n^{\wedge k}$ multiplications 
\[ \Sigma^{-mk|v_n|} \underline{\E(n)}_0^{\wedge k} \longrightarrow \Sigma^{-(m+1)k|v_n|}\underline{\E(n)}_0^{\wedge k}.\]
Applying $\E(n)$ cohomology, Milnor's $\lim^1$-sequence gives us
\[ 0 \longrightarrow \text{lim}^1 \E(n)^{-1}(\Sigma^{-mk|v_n|}\sp{\E(n)}{0}^{\wedge k}) \longrightarrow \E(n)^0(\E(n)^{\wedge k}) \longrightarrow \text{lim}^0 \E(n)^0(\sp{\E(n)}{0}^{\wedge k}) \longrightarrow 0.\]
Using the $|v_n|$-periodicity of $\E(n)$ noted above to identify $\Sigma^{-|v_n|}\underline{\E(n)}_0$ with $\underline{\E(n)}_0$, we have the sequence
\[ 0 \longrightarrow \text{lim}^{1} \E(n)^{-1} (\underline{\E(n)}_0^{\wedge k}) \longrightarrow \E(n)^0(\E(n)^{\wedge k}) \longrightarrow \text{lim}^0 \E(n)^0(\underline{\E(n)}_0^{\wedge k}) \longrightarrow 0. \]

\medskip
\noindent
We may now invoke the weak projective property of the spaces $\underline{\E(n)}_0^{\wedge k}$ (see Remark \ref{smashproj}) and identify the last term with $\lim^0 E(n)^0(\underline{E(n)}_0^{\wedge k})$. The space $\sp{E(n)}{0}$ has evenly graded cohomology (since it has the projective property) and homotopy; it follows from the Atiyah-Hirzebruch spectral sequence that the $E(n)$-cohomology of $\sp{E(n)}{0}$ is evenly graded as well (as are its $k$-fold smash products). Thus, in the analogous non-equivariant Milnor sequence 
\[ 0 \longrightarrow \text{lim}^{1} E(n)^{-1} (\underline{E(n)}_0^{\wedge k}) \longrightarrow E(n)^0(E(n)^{\wedge k}) \longrightarrow \text{lim}^0 E(n)^0(\underline{E(n)}_0^{\wedge k}) \longrightarrow 0 \] 
the $\lim^1$ term vanishes. Consequently, we identify $\lim^0E(n)^0(\underline{E(n)}_0^{\wedge k})$ with $E(n)^0(\sp{E(n)}{0})$ and obtain our short exact sequence of interest 
\begin{equation}\label{ses1} 0 \longrightarrow \lim{}^1 \E(n)^{-1} (\underline{\E(n)}_0^{\wedge k})  \longrightarrow \E(n)^0(\E(n)^{\wedge k}) \longrightarrow E(n)^0(E(n)^{\wedge k}) \longrightarrow 0,\end{equation}
with the last map being $\rho$. 
\medskip
\noindent

By writing $\E(n)^{\wedge k}$ as a colimit of $v_n^k$-multiplication maps as above and $\MU^{\wedge s}$ as a colimit of suspension spectra 
$$\MU^{\wedge s}=\colim_m\Sigma^{\infty-ms|v_n|}\underline{\MU}_{m|v_n|}^{\wedge s}$$
the above proof readily extends to show the existence of a short exact sequence
\begin{align}\label{ses2} 0 \longrightarrow \text{lim}^1 \E(n)^{-1} (\underline{\E(n)}_0^{\wedge k} \wedge \underline{\MU}_{m|v_n|}^{\wedge s})  \longrightarrow &\E(n)^0(\E(n)^{\wedge k} \wedge \MU^{\wedge s})& \\
 \quad \quad \quad \quad \quad \quad \quad \quad &\longrightarrow E(n)^0(E(n)^{\wedge k} \wedge MU^{\wedge s}) \longrightarrow 0. \nonumber \end{align}
where we have used Lemma \ref{zeroproj} above to identify the right hand term as before.
\medskip
\noindent

We are now ready to construct the $\MU$-algebra structure on $\E(n)$ that will be shown to be homotopy commutative and homotopy associative up to phantom maps. 

\medskip
\begin{defn} \label{ring structure}
Define $\hat{\mu}$ to be any element in $\E(n)^0(\E(n)^{\wedge 2})$ that lifts the canonical ring structure of $E(n)^0(E(n)^{\wedge 2})$ along $\rho$. Define the unit map $1 : \MU \longrightarrow \E(n)$ to be the canonical map expressing $\E(n)$ as a localized quotient of $\MU$. 
\end{defn}

\medskip
\noindent
As a formal consequence of the short exact sequence constructed above, we obtain Theorem \ref{main1} from the introduction:

\medskip
\begin{main1}
The class $\hat{\mu}$ defines a homotopy commutative, homotopy associative, and unital $\MU$-algebra structure on $\E(n)$ up to phantom maps.
\end{main1}
\begin{proof} By construction, $\hat{\mu}$ maps to the canonical ring structure on $E(n)$ under the forgetful map, $\rho$. It follows that any obstruction to the homotopy associativity, commutativity, or unitality of $\hat{\mu}$, viewed as a class in $\E(n)^0(\E(n)^{\wedge k} \wedge \MU^{\wedge s})$, maps to zero under $\rho$. We claim that the only elements of the kernel of $\rho$ are phantoms.

To see this, recall the short exact sequence (\ref{ses2}) above:
\begin{equation*} 0 \rightarrow \text{lim}^1 \E(n)^{-1} (\underline{\E(n)}_0^{\wedge k} \wedge \underline{\MU}_{m|v_n|}^{\wedge s})  \rightarrow \E(n)^0(\E(n)^{\wedge k} \wedge \MU^{\wedge s}) \rightarrow E(n)^0(E(n)^{\wedge k} \wedge MU^{\wedge s}) \rightarrow 0 \end{equation*}
where the right hand side was identified via
$$\text{lim}^0\E(n)^0(\underline{\E(n)}_0^{\wedge k} \wedge \underline{\MU}_{m|v_n|}^{\wedge s}) \cong \text{lim}^0E(n)^0(\underline{E(n)}_0^{\wedge k} \wedge \underline{MU}_{m|v_n|}^{\wedge s}) \cong E(n)^0(E(n)^{\wedge k} \wedge MU^{\wedge s}).$$
Notice that by exactness, an element in the kernel of $\rho$ is in the image of the $\lim^1$-term in $\E(n)^0(\E(n)^{\wedge k} \wedge \MU^{\wedge s})$ and so restricts trivially to all the terms $\E(n)^0(\underline{\E(n)}_0^{\wedge k} \wedge \underline{\MU}^{\wedge s}_{m|v_n|})$ in the inverse system. Any map from a finite CW complex into $\E(n)^{\wedge k} \wedge \MU^{\wedge s}$ must factor through a finite stage of the colimit, that is, through $\underline{\E(n)}_0^{\wedge k} \wedge \underline{\MU}^{\wedge s}_{m|v_n|}$ for some $m$. It follows that any element in the image of the $\lim^1$ term is zero upon restriction to any finite CW-complex. In other words, the kernel of $\rho$ consists entirely of phantoms.
\end{proof}

\medskip
\begin{remark}
One may attempt to compute the group of phantom maps $\lim{}^1 \E(n)^{-1} (\underline{\E(n)}_0^{\wedge k})$ explicitly by identifying it with the vector space $E(n)^{-1}(E(n)^{\wedge k}) \otimes \ZZ/2 $ (suitably extended by a $\ZZ/2$-agebra). This is an open problem, but it appears to the authors that this vector space is trivial for $n=1$, but may fail to be so for $n>1$. Hence we at present have no general way of ensuring that the ring structure we have constructed is rigid up to homotopy for the spectra $\E(n)$, $n>1$.
\end{remark}

\section{Multiplicative structure on $\MU_{(2)}[v_n^{-1}]$} \label{commutative MU}

\medskip
\noindent
We would like to show that the multiplication $\hat{\mu}$ constructed above naturally induces a commutative algebra structure on the $\E(n)$-cohomology of \emph{any} (not necessarily finite-dimensional) space. An essential ingredient in our construction will be the multiplication on $\MU_{(2)}[v_n^{-1}]$. We pause to describe it in this section. The ingredient we need is the following proposition, which appears as Proposition 9.15 in \cite{HHR16} and is proved in \cite{HH13}.

\medskip
\begin{prop}\label{prop:hhr}\cite[Corollary 4.11]{HH13} Let $R$ be a $G$-equivariant commutative ring with $D \in \pi_\star^G(R)$. If $D$ has the property that for every $H \subset G$, $N_H^Gi_H^*D$ divides a power of $D$, then the spectrum $D^{-1}R$ has a unique commutative algebra structure such that the map $R \longrightarrow D^{-1}R$ is a map of commutative rings.
\end{prop}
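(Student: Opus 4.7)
The plan is to realize $D^{-1}R$ as a localization inside the category of commutative $G$-equivariant $R$-algebras, and to identify the divisibility hypothesis as precisely the condition needed to make this localization exist in that category.

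Concretely, I would define $D^{-1}R$ as the sequential homotopy colimit
\[ D^{-1}R := \colim \bigl( R \xrightarrow{\cdot D} \Sigma^{-|D|}R \xrightarrow{\cdot D} \Sigma^{-2|D|}R \xrightarrow{\cdot D} \cdots \bigr), \]
which gives the expected underlying $R$-module and in which $D$ acts invertibly. The subtlety is promoting this to a commutative $G$-ring: a genuine equivariant commutative ring spectrum in the sense of \cite{HHR16} carries not only a multiplication but also multiplicative norms $N_H^G : \pi_\star^H \to \pi_\star^G$ for every $H \subset G$, and for the localization to live in this category one must simultaneously invert $D$ and every norm of $D$.

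The key step is to observe that the hypothesis makes this automatic: if $N_H^G i_H^* D$ divides $D^k$ in $\pi_\star^G R$, say $D^k = u \cdot N_H^G i_H^* D$, then the image of $N_H^G i_H^* D$ in $\pi_\star^G D^{-1}R$ becomes a unit as soon as $D$ does. Hence the collection of maps to be inverted (namely $D$ together with all its norms $N_H^G i_H^* D$) has the same localizing effect as inverting $D$ alone, and this collection is by construction stable under norms and under smash product with any object. Invoking the Hill--Hopkins framework, one obtains a monoidal Bousfield localization on commutative $R$-algebras whose value at $R$ is a commutative $R$-algebra with underlying $R$-module $D^{-1}R$. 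The compatibility of the localization with norms is the hard part, and the divisibility hypothesis is engineered precisely to make the class of local equivalences closed under $N_H^G$, so that commutative algebra structure descends to the localization.

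Uniqueness follows formally: in a presentable symmetric monoidal $\infty$-category, a localization is initial among objects in which the chosen class of maps becomes invertible. Any commutative $R$-algebra structure on $D^{-1}R$ for which $R \to D^{-1}R$ is a map of commutative rings automatically inverts $D$, hence receives a canonical map from our construction that is an equivalence of underlying $R$-modules, yielding a unique such commutative algebra structure. The main obstacle is genuinely the norm-compatibility in the previous paragraph; without the divisibility assumption, inverting $D$ would force one to adjoin genuinely new elements at proper subgroups, and the commutative ring structure would not descend.
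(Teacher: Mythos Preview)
The paper does not give its own proof of this proposition; it is quoted from \cite{HH13} (also recorded as \cite[Proposition 9.15]{HHR16}) and used as a black box in Section~\ref{commutative MU}. Your outline is essentially the Hill--Hopkins argument: the divisibility condition is exactly what makes the class of $D$-local equivalences stable under the norm functors $N_H^G$, so that the monoidal Bousfield localization on $R$-modules descends to genuine commutative $R$-algebras, with uniqueness coming from the universal property of localization. Since the paper contains no argument beyond the citation, there is nothing further to compare.
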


We begin by constructing a $C_2$-equivariant associative and commutative ring (in the highly structured sense) that lifts $\E(n)$. We begin with the fact that $\MU$ has this structure (see \cite{HHR16} or \cite{HK01}). We localize at $p=2$. The spectrum $\MU_{(2)}$ is a $C_2$-equivariant commutative ring, as shown in \cite{HH13}. By \cite{HK01}, the forgetful map
$$\rho: \pi_{\ast(1+\alpha)}(\MU_{(2)}) \longrightarrow \pi_{2\ast}(MU_{(2)})$$
is an isomorphism. The classes $v_i$ (Araki, Hazewinkel, or others) in $\pi_{2\ast}(MU_{(2)})$ may now be lifted via $\rho^{-1}$ to equivariant classes, $\rho^{-1}(v_i)$. While in the rest of the manuscript, we abuse notation by denoting $\rho^{-1}(v_i)$ by $v_i$, in the following lemma, we will distinguish between the nonequivariant $v_i$ and the equivariant $\rho^{-1}(v_i)$. Note that for $i\leq n$ and the Araki $v_i$, the images of these lifts $\rho^{-1}(v_i)$ in the coefficients of $\E(n)$ are exactly the equivariant $v_i$ we have been working with throughout.

Our next step is to invert $\rho^{-1}(v_n)$.

\medskip
\begin{lemma} The spectrum $(\rho^{-1}(v_n))^{-1}\MU_{(2)}$ is a $C_2$-equivariant commutative ring. \end{lemma}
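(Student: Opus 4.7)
The plan is to invoke Proposition \ref{prop:hhr} with $G = C_2$, $R = \MU_{(2)}$, and $D = \rho^{-1}(v_n) \in \pi_{(2^n-1)(1+\alpha)}^{C_2}(\MU_{(2)})$. Since $\MU_{(2)}$ is already known to be a $C_2$-equivariant commutative ring (\cite{HH13}, \cite{HHR16}), the only remaining task is to verify, for each subgroup $H \subseteq C_2$, that $N_H^{C_2} i_H^* D$ divides a power of $D$.

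The case $H = C_2$ is immediate, since $N_{C_2}^{C_2} i_{C_2}^* D = D$. The heart of the argument is the case $H = \{e\}$, where $i_e^* D = v_n$ and we must show that $N_e^{C_2}(v_n)$ divides a power of $\rho^{-1}(v_n)$. Both classes sit in bidegree $2(2^n-1)(1+\alpha)$, which is of the form $\ast(1+\alpha)$. The plan is to use the isomorphism
\[ \rho \colon \pi_{\ast(1+\alpha)}^{C_2}(\MU_{(2)}) \stackrel{\simeq}{\longrightarrow} \pi_{2\ast}(MU_{(2)}) \]
of \cite{HK01} to identify $N_e^{C_2}(v_n)$ with $\rho^{-1}(v_n)^2$, at which point the divisibility claim becomes trivial ($N_e^{C_2}(v_n) = D^2$).

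For the identification, I would appeal to the standard property that for a $C_2$-equivariant commutative ring spectrum and any $x \in \pi_*(MU_{(2)})$, the forgetful image of the norm is $\rho\, N_e^{C_2}(x) = x \cdot \gamma^*(x)$, where $\gamma$ generates $C_2$. The Araki generator $v_n$ is a real class, meaning $\gamma^*(v_n) = v_n$, so $\rho\, N_e^{C_2}(v_n) = v_n^2 = \rho(\rho^{-1}(v_n)^2)$. Injectivity of $\rho$ in this bidegree then yields $N_e^{C_2}(v_n) = \rho^{-1}(v_n)^2$, completing the verification and allowing Proposition \ref{prop:hhr} to deliver the $C_2$-equivariant commutative ring structure on $(\rho^{-1}(v_n))^{-1}\MU_{(2)}$.

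The main (mild) obstacle is the reality of $v_n$ invoked above: it is the essential input that makes the norm a square of the equivariant lift. Were one to have chosen a lift of a non-real class, the norm would only be expressible in terms of $v_n$ and its conjugate, and no power of $D$ need be a multiple. The reality of the Araki generators is well-documented, however, so the hypothesis of the proposition is met and the conclusion follows.
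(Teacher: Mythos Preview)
Your overall strategy is exactly the paper's: invoke Proposition~\ref{prop:hhr} and verify the divisibility hypothesis for $H=\{e\}$ by computing $\rho$ of the norm via the double coset formula $\rho\,N_e^{C_2}(x)=x\cdot c(x)$. The issue is the sign in your computation of $c(v_n)$. You assert that the Araki generator is ``real'' in the sense $\gamma^*(v_n)=v_n$, but this is false: complex conjugation acts on $\pi_{2k}(MU)$ by $(-1)^k$ on classes of complex manifolds (e.g.\ $c([\CC P^k])=(-1)^k[\CC P^k]$), and since $|v_n|/2=2^n-1$ is odd one has $c(v_n)=-v_n$, as the paper states. Consequently $\rho\,N_e^{C_2}(v_n)=v_n\cdot c(v_n)=-v_n^2$, and hence $N_e^{C_2}(v_n)=-\bigl(\rho^{-1}(v_n)\bigr)^2$, not $+\bigl(\rho^{-1}(v_n)\bigr)^2$.

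This does not damage the conclusion: $-1$ is a unit, so $N_e^{C_2}(v_n)$ still divides $D^2$ and Proposition~\ref{prop:hhr} applies. But your stated justification (``$v_n$ is real, hence conjugation-fixed'') conflates two different meanings of ``real'': having an equivariant lift along $\rho$ versus being literally fixed by $c$. The former holds for $v_n$; the latter does not. With the sign corrected, your argument coincides with the paper's.
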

\begin{proof} We apply Proposition \ref{prop:hhr}  above (quoted from \cite{HHR16}). The map $i_H^*$ is exactly $\rho$, and so $\rho(\rho^{-1}(v_n))=v_n \in \pi_{2(2^n-1)}(MU_{(2)})$. We need to show that $N_{\{e\}}^{C_2}(v_n)$ divides a power of $\rho^{-1}(v_n)$. In fact, we claim that $N_{\{e\}}^{C_2}(v_n)=-[\rho^{-1}(v_n)]^2$. To see this, we apply the isomorphism $\rho$ to both sides. Let $c$ denote the action of the generator of $C_2$. Using the fact that $c(v_n)=-v_n$, the double coset formula (see e.g. Proposition 10.9(v) in \cite{Sch15} or \cite{May96}) reduces in our case to
$$\rho \circ N_{\{e\}}^{C_2}(v_n)=v_n \cdot c(v_n)=-v_n^2,$$
which completes the proof.
\end{proof}

\medskip
\begin{remark}
Let us again denote the spectrum $(\rho^{-1}(v_n))^{-1}\MU_{(2)}$ by $\MU[v_n^{-1}]$. This spectrum serves as a commutative proxy for $\E(n)$. Indeed, essentially all prior results of the authors that hold for $\E(n)$ extend verbatim to this spectrum. 
\end{remark}

\section{Unstable properties of the multiplicative structure}
\medskip
\noindent
We now address the question of the (unstable) multiplicative structure on $\E(n)$. We begin with the following lemma:

\medskip
\begin{lemma}\label{zerospace}
There is a unique (homotopy) commutative, associative, and unital equivariant H-ring structure on the infinite loop space of $\E(n)$ that lifts any fixed H-ring structure on $\underline{E(n)}_0$:
$$\hat{\mu}_0 : \underline{\E(n)}_0 \times \underline{\E(n)}_0 \longrightarrow \underline{\E(n)}_0$$
\end{lemma}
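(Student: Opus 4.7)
\textbf{Proof plan for Lemma \ref{zerospace}.} The plan is to reduce the entire statement---existence, uniqueness, and verification of each H-ring axiom---to the corresponding (already known) nonequivariant assertion, by applying the weak projective property of Lemma \ref{zeroproj}.

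First I would translate the data of an equivariant multiplication
$$\hat{\mu}_0 : \underline{\E(n)}_0 \times \underline{\E(n)}_0 \longrightarrow \underline{\E(n)}_0$$
into a class in $\E(n)^0(\underline{\E(n)}_0 \times \underline{\E(n)}_0)$, which sits in the diagonal component $\E(n)^{0(1+\alpha)}(\underline{\E(n)}_0 \times \underline{\E(n)}_0)$. By Lemma \ref{zeroproj}, the forgetful map
$$\rho : \E(n)^{0(1+\alpha)}(\underline{\E(n)}_0 \times \underline{\E(n)}_0) \longrightarrow E(n)^0(\underline{E(n)}_0 \times \underline{E(n)}_0)$$
is an isomorphism. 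This immediately produces a unique $\hat{\mu}_0$ lifting the chosen nonequivariant H-ring multiplication, settling both existence and uniqueness at a stroke. The equivariant unit $S^0 \longrightarrow \underline{\E(n)}_0$ is defined analogously as the unique class in $\E(n)^0(S^0)$ lifting the nonequivariant unit.

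To verify each H-ring axiom, I would express it as an equality of two classes in a group of the form $\E(n)^{0(1+\alpha)}(\underline{\E(n)}_0^{\times k})$: commutativity of $\hat{\mu}_0$ requires $k=2$; the unit axioms require $k=1$; and both associativity of $\hat{\mu}_0$ and its distributivity over the underlying additive infinite loop space structure require $k=3$. Each such space has the weak projective property by Lemma \ref{zeroproj}, so on each group in question the forgetful map $\rho$ is injective. Since both sides of each axiom map to equal classes nonequivariantly by hypothesis on the fixed H-ring structure on $\underline{E(n)}_0$, the two sides agree equivariantly as well.

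The main obstacle, such as it is, amounts to bookkeeping: confirming that Lemma \ref{zeroproj} (together with its trivial extension to a point, which handles the unit) covers every finite product of copies of $\underline{\E(n)}_0$ appearing in the H-ring coherence diagrams. Once this is in place, the lemma follows as a formal consequence of the isomorphism provided by $\rho$ on the relevant cohomology groups, with no stable obstruction theory or phantom-map subtleties to worry about, since at the unstable level $\rho$ is an honest isomorphism rather than merely a surjection with phantom kernel.
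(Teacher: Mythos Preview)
Your proposal is correct and follows essentially the same approach as the paper: define $\hat{\mu}_0$ as the unique $\rho$-preimage of the nonequivariant multiplication using the weak projective property of $\underline{\E(n)}_0^{\times j}$ from Lemma \ref{zeroproj}, then verify each axiom by applying the isomorphism $\rho$ and reducing to the known nonequivariant statement. Your write-up is slightly more explicit than the paper's about which values of $k$ are needed for each axiom and about distributivity over the additive structure, but the underlying argument is identical.
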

\begin{proof} Recall that Lemma \ref{zeroproj} shows that $\sp{\E(n)}{0}^{\times j}$ has the weak projective property. As $E(n)$ is a (homotopy) associative and commutative ring spectrum, we may define the map $\hat{\mu}_0: \underline{\E(n)}_0 \times \underline{\E(n)}_0 \longrightarrow \underline{\E(n)}_0$ as the preimage of the multiplication on $\underline{E(n)}_0$ along the isomorphism
$$\xymatrix{\E(n)^0(\underline{\E(n)}_0\times \underline{\E(n)}_0) \ar@{->}[r]^-{\rho}_-{\cong} & E(n)^0(\underline{E(n)}_0 \times \underline{E(n)}_0)}$$
The unity, commutativity, and associativity of $\hat{\mu}_0$ are similarly verified by applying the isomorphism $\rho$, as the desired relations all hold in $E(n)$-cohomology. Likewise, given a choice of $H$-ring structure on $\sp{E(n)}{0}$, the uniqueness of the equivariant lift follows from the fact that $\rho$ is an isomorphism.
\end{proof}

\medskip
\begin{remark}
The unstable multiplication $\hat{\mu}_0$ we constructed in the previous lemma and the stable multiplication $\hat{\mu}$ we constructed in Definition \ref{ring structure} are compatible in the sense that $\hat{\mu}$ restricts to $\hat{\mu}_0$ on the zero space of $\E(n)$. To see this, apply the isomorphism $\rho$ and note that this claim is true nonequivariantly by construction.
\end{remark}

\medskip

\medskip
We now prove our second main result, Theorem \ref{main2}.

\begin{proof}(of Theorem \ref{main2}) Let $X$ be a space and consider $f \in \E(n)^V(X)$ and $g \in \E(n)^W(Y)$. We define the product $fg
\in \E(n)^{V+W}(X)$ as follows. First, note that since $\E(n)$ is an $\MU[v_n^{-1}]$-module, we may multiply $f$ and $g$ by classes in the coefficients $\MU[v_n^{-1}]_\star$. Let $k$ and $l$ be the minimal integers such that
$$v_n^kf \in \E(n)^{V'}(X), \ind v_n^lg \in \E(n)^{W'}(X)$$
with $V', W' \leq 0$ (by this we mean that when we express each representation as a combination of irreducibles, each coefficient should be nonpositive). These classes are represented by maps
$$\xymatrix{X \ar@{->}[r]^-{v_n^kf} & \underline{\E(n)}_{V'} = \Omega^{-V'}\underline{\E(n)}_0},
 \ind \xymatrix{X \ar@{->}[r]^-{v_n^lg} & \underline{\E(n)}_{W'} = \Omega^{-W'}\underline{\E(n)}_0}$$
We adjoin the loops over to form classes
$$\xymatrix{\Sigma^{-V'}X \ar@{->}[r]^-{v_n^kf} & \underline{\E(n)}_0}, \ind \xymatrix{\Sigma^{-W'}X \ar@{->}[r]^-{v_n^lg} & \underline{\E(n)}_0}$$
Note that since $-V', -W' \geq 0$, these are positive suspensions and so the sources of these maps are \emph{spaces}. We may thus smash them together, precompose with the diagonal on $X$ and postcompose with the multiplication on the zero space (which factors through the smash product) constructed in Lemma \ref{zerospace}:
$$\xymatrix{\Sigma^{-V'-W'}X \ar@{->}[r]^-{\Delta} & \Sigma^{-V'} X \wedge \Sigma^{-W'}X \, \, \ar@{->}[r]^-{v_n^kf \wedge v_n^lg} & \, \, \underline{\E(n)}_0 \wedge \underline{\E(n)}_0 \ar@{->}[r]^-{\hat{\mu}_0} & \underline{\E(n)}_0}$$
This produces a class in $\E(n)^{V'+W'}(X)$. Finally, we multiply by $v_n^{-k-l}$ to define the product $f\cdot g \in \E(n)^{V+W}(X)$.

\medskip
\noindent
The unit of this multiplication comes from the unit on $\MU[v_n^{-1}]$. The unity, associativity, and commutativity of this multiplication follow from the corresponding properties of $\underline{\E(n)}_0$.

\medskip
\noindent
We have shown that for any space $X$, $\E(n)^\star(X)$ is a graded associative and commutative ring. It remains to show that this is compatible with the multiplication on $\MU[v_n^{-1}]$-cohomology. If we carry out the above construction to define multiplication on $\MU[v_n^{-1}]^\star(X)$, it is evident that this agrees with the multiplication coming from the ring spectrum structure on $\MU[v_n^{-1}]$. To see that this multiplication agrees with the one defined on $\E(n)$-cohomology, it suffices to check this fact on zero spaces. To see that this diagram
$$\xymatrix{\sp{\MU}{0} \times \underline{\MU}_0 \ar@{->}[r]^-{\hat{\mu}_{\MU}} \ar@{->}[d] & \underline{\MU}_0 \ar@{->}[d] \\
\underline{\E(n)}_0 \times \underline{\E(n)}_0 \ar@{->}[r]^-{\hat{\mu}_{\E(n)}} & \underline{\E(n)}_0}$$
commutes, we may use the fact that $\underline{\MU}_0$ and $\underline{\MU}_0 \times \underline{\MU}_0$ are spaces with weak projective properties to map the diagram isomorphically along $\rho$ where its commutativity is apparent.
\end{proof}



\section{The MR$(n)$ orientation for MO$[2^{n+1}]$ revisited} 

\medskip
\noindent
Let $n > 0$ and let $\MO[2^{n+1}]$ denote the Thom spectrum for the virtual bundle over $\BOg$ given by multiplication by $2^{n+1}$ seen as a self-map of $\BOg$. In other words, $\MO[2^{n+1}]$ is the spectrum that represents real vector bundles $\xi$ endowed with an isomorphism $\xi \rightarrow 2^{n+1} \zeta$ for some bundle $\zeta$. 

\medskip
\noindent
In \cite{KW14}, the first and third authors showed that $\MO[2^{n+1}]$ admits an orientation with respect to $ER(n)$. However, that proof requires the homotopy commutativity of $\ER(n)$ which is unclear in light of this document. Consequently, in this section, we reproduce the argument in \cite{KW14} to show that $\MO[2^{n+1}]$ admits a canonical orientation with respect to the commutative ring spectrum $\MUR(n)$ defined as the homotopy fixed points of the spectrum $\MU [v_n^{-1}]$, where it is understood that $\MU$ is 2-local. This orientation descends to an orientation with respect to $ER(n)$, generalizing the $\hat{A}$-genus for real K-theory. We also take this opportunity to fix an error in the proof of this result given in \cite{KW14}, see Remark \ref{error}. 

\bigskip
\begin{thm} \label{main} 
The spectrum $\MO[2^{n+1}]$ supports a canonical orientation $u_{n+1}$ with respect to $\MUR(n)$. Furthermore, given a real vector bundle of the form $2^{n+1} \zeta$, this orientation is uniquely determined by the property that the image of $u_{n+1}$ in $\MUg[v_n^{-1}]$ is given by
\[ u_{n+1}(2^{n+1} \zeta) = \mu(2^n \zeta \otimes \CC) \cup \psi(\zeta \otimes \CC)^{2^{n-1}}, \]
where $\mu$ is the usual Thom class from usual complex orientation, and $\psi(\zeta \otimes \CC)$ is the series in $\MUg[v_n^{-1}]^*(BU)$ generated from line bundles by:
\[ \psi(x) = \frac{[-1]_{\MUg} (x)}{-x}. \]
\end{thm}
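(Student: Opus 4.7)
The approach is to construct $u_{n+1}$ inside the genuine $C_2$-equivariant commutative ring spectrum $\MU[v_n^{-1}]$ of Section \ref{commutative MU} and then descend to $\MUR(n)$ by taking $C_2$-fixed points. This sidesteps the gap in the original proof of \cite[Theorem 1.4]{KW14}, which implicitly relied on a homotopy-commutative ring structure on $ER(n)$ that is not (yet) known to exist; working inside $\MU[v_n^{-1}]$ provides a strict commutative product for our cup products.

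The first step is the factorization $2^{n+1}\zeta = r(2^n\zeta \otimes \CC)$, where $r$ denotes realification, applied to the universal bundle. This identifies Thom spaces $M(2^{n+1}\zeta) \cong M(2^n\zeta\otimes\CC)$ over $\BOg$, exhibiting $\MO[2^{n+1}]$ as a complex Thom spectrum. The complex orientation of $\MU[v_n^{-1}]$ then provides an equivariant Thom class $\mu(2^n\zeta\otimes\CC)$ in diagonal degree, and I would define $u_{n+1}$ by multiplying this by the correction factor $\psi(\zeta\otimes\CC)^{2^{n-1}}$, extended multiplicatively from line bundles via the splitting principle. Since $\psi(x) = 1 + O(x)$ restricts to $1$ on the zero section, the product remains a Thom class, and its image in $\MUg[v_n^{-1}]$ matches the prescribed formula by construction.

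For $C_2$-invariance, a direct formal-group-law computation on a single line bundle suffices: conjugation sends the Euler class $x$ to $[-1]_F(x) = -x\,\psi(x)$ and therefore $\psi(x)$ to $\psi(x)^{-1}$, so the expression $x^{2^n}\psi(x)^{2^{n-1}}$ transforms to $(-x\psi(x))^{2^n}\psi(x)^{-2^{n-1}} = x^{2^n}\psi(x)^{2^{n-1}}$, using $n \geq 1$ to ensure $(-1)^{2^n} = 1$. The splitting principle propagates this invariance to any $\zeta \otimes \CC$, since conjugation permutes the Chern roots while preserving the multiplicative structure of $\psi$. Passing to $C_2$-fixed points of $\MU[v_n^{-1}]$ then yields $u_{n+1}: \MO[2^{n+1}] \to \MUR(n)$, and uniqueness on bundles of the form $2^{n+1}\zeta$ is immediate from the explicit formula.

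The main obstacle is ensuring that the $C_2$-invariance calculation, which naturally takes place in ordinary $\MUg[v_n^{-1}]$-cohomology at the level of the underlying spectrum, genuinely produces a class in the fixed-point spectrum $\MUR(n)$ rather than a merely homotopy-invariant nonequivariant class. This is precisely where the strict commutativity of $\MU[v_n^{-1}]$ established in Section \ref{commutative MU} becomes essential: it allows us to form the cup product $\mu \cup \psi^{2^{n-1}}$ genuinely at the equivariant spectrum level before taking fixed points, rather than in $ER(n)$-cohomology where the multiplicative structure is only up to phantom maps.
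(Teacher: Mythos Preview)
There is a genuine gap, and it lies exactly where you identify the ``main obstacle.'' Showing that the class $\mu(2^n\zeta\otimes\CC)\cup\psi(\zeta\otimes\CC)^{2^{n-1}}$ is invariant under the conjugation involution $c$ on $\MUg[v_n^{-1}]^*(\MO[2^{n+1}])$ is precisely the statement that $d_1(u_{n+1})=0$ in the Bockstein spectral sequence converging to $\MUR(n)^*(\MO[2^{n+1}])$. It does \emph{not} imply that $u_{n+1}$ lifts to $\MUR(n)$-cohomology: there are higher differentials $d_{2^k-1}$ for $2\leq k\leq n+1$, and Proposition \ref{bound} shows these are genuinely nontrivial on the analogous classes $u_k$ for $k\leq n$. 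The paper's proof spends almost all of its effort on exactly these higher obstructions, using the doubling map $\tau$ and a delicate degree argument to kill $d_{2^k-1}(u_{k+1})$ inductively, and a separate algebraic argument for the final differential $d_{2^{n+1}-1}$.

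Your proposed resolution via strict commutativity of $\MU[v_n^{-1}]$ does not close this gap. The Real orientation does provide a genuinely equivariant Thom class, but it lives in $\MU[v_n^{-1}]^{2^nk(1+\alpha)}$ of the Thom space of the \emph{Real} bundle $2^n\zeta\otimes\CC$, which carries the conjugation action on fibers (representation $k(1+\alpha)$ fiberwise). The target you need is $\MU[v_n^{-1}]^{2^{n+1}k}$ of $\MO[2^{n+1}]$ with \emph{trivial} $C_2$-action (representation $2k$ fiberwise). These are different $C_2$-spaces and different bidegrees, and your nonequivariant identification $M(2^{n+1}\zeta)\cong M(2^n\zeta\otimes\CC)$ does not respect the equivariant structure. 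Commutativity lets you cup equivariant classes together, but both factors have to already lie in the correct equivariant cohomology group, and they do not. So the Bockstein spectral sequence analysis in the paper is doing essential, non-formal work that your argument omits.
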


First, we lay some groundwork. 

Consider the Real orientation of $\MU$ 
given by a $\Z/2$-equivariant map: 
\begin{equation} 
\label{leftright}
\mu_1 : \MU(1) \longrightarrow \Sigma^{(1+\alpha)} \MU. 
\end{equation} 
where $\MU(1) \simeq \mathbb{CP}^\infty$ denotes the $C_2$-space in the usual prespectrum defining $\MU$. We view $MU^*(MU(1))$ as a rank one free module on $\mu_1$ over $MU^*(BU(1))$. We need the following fact regarding the $C_2$-action on $\mu_1$.

\bigskip
\begin{lemma}\label{action} Let $x$ denote the first Chern class in $MU^2(BU(1))$ and let $\psi(x)$ denote the series defined in Theorem \ref{main} above. Then
$$
c(\mu_1) = \mu_1 \frac{[-1]_{\MUg} (x)}{-x} 
= 
 \mu_1 \, \psi(x). 
$$

\end{lemma}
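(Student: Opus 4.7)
The plan is to reduce the identification of $c(\mu_1)$ to a power-series computation via the Thom isomorphism, then pin that series down using the formal group law on $MU^*[[x]]$ together with a normalization check on the bottom cell.

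First, the Thom isomorphism makes $\widetilde{MU}^{*}(MU(1))$ into a free rank-one module over $MU^{*}(BU(1)) = MU^*[[x]]$ with generator $\mu_1$. Consequently there is a unique $f(x) \in MU^*[[x]]$ with $c(\mu_1) = \mu_1 \cdot f(x)$, and the task becomes to verify $f(x) = \psi(x)$.

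Second, one identifies the action of $c$ on $x \in MU^{2}(BU(1))$. Since $c: BU(1) \to BU(1)$ sends a line $\ell \subset \mathbb{C}^\infty$ to its conjugate $\bar\ell$, it classifies the conjugate of the tautological line bundle $L$; because $\bar L \cong L^{-1}$ for complex line bundles, we deduce
\[
c(x) \;=\; c_1(L^{-1}) \;=\; [-1]_{\MUg}(x),
\]
the formal inverse series in the universal formal group law on $MU^{*}[[x]]$.

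Third, to pin down $f(x)$ on the Thom-space level, one observes that $c: MU(1) = \mathrm{Th}(L) \to \mathrm{Th}(L) = MU(1)$ is the map of Thom spaces induced by the bundle isomorphism $c^*L \cong \bar L \cong L^{-1}$ covering the conjugation on $BU(1)$. Under the canonical identification $MU(1) \simeq \mathbb{CP}^\infty$, the Thom class $\mu_1$ generates $\widetilde{MU}^{*}(MU(1))$ as a free module, and $c(\mu_1)$ corresponds, via the identification $\mathrm{Th}(L) \simeq \mathrm{Th}(L^{-1})$, to the Thom class of $L^{-1}$; combined with the formal-group computation $c(x) = [-1]_{\MUg}(x)$ and the sign coming from the Real-orientation normalization of $\mu_1$ relative to $x$ (equivalently, from the fact that the conjugation self-map on the bottom cell $\mathbb{CP}^1 \simeq S^{1+\alpha}$ has degree $-1$ as a self-map of the $2$-sphere), one arrives at
\[
f(x) \;=\; \frac{[-1]_{\MUg}(x)}{-x} \;=\; \psi(x).
\]

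The main obstacle will be the careful tracking of this overall sign. Three ingredients have to be combined consistently: (i) the identification of $\mu_1$ with the canonical generator $x$ in the Thom iso, (ii) the formal-group computation $c(x) = [-1]_{\MUg}(x)$, and (iii) the degree of the conjugation self-map on the bottom cell. Once these sign conventions are matched to the Real orientation of $\MU$ as set up in \cite{HK01}, the factor $-x$ in the denominator of $\psi(x)$ emerges and the identity follows.
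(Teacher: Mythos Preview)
Your proposal is correct and follows essentially the same route as the paper: reduce to a power series via the Thom isomorphism, identify conjugation on the line bundle with passage to the dual so that the $[-1]$-series appears, and extract the extra sign from the conjugation action on $S^{1+\alpha}$. The paper organizes the sign bookkeeping slightly more transparently by directly exploiting the $C_2$-equivariance of the orientation map $\mu_1 : \MUg(1) \to \Sigma^{1+\alpha}\MU$: computing the conjugation action on the \emph{source} gives $[-1]_{\MUg}(\mu_1) = \mu_1\,\dfrac{[-1]_{\MUg}(x)}{x}$ (using $\mu_1^k = \mu_1 x^{k-1}$), while on the \emph{target} the action is $-c$ (the $-1$ from $S^{1+\alpha}$, the $c$ from $\MU$); equating the two yields the formula at once. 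Your step 3 contains exactly these ingredients but interleaves the source and target computations, so the reader has to untangle which ``$c$'' is acting where; separating them as the paper does makes the origin of the $-x$ in the denominator unambiguous.
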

\begin{proof} The action of complex conjugation on $\Sigma^{1+\alpha} 
\MU$ can be identified with $-c$ (the $c$ from $\MU$
and the $-1$ from the orientation reversing action on the
two sphere). On the other hand, complex conjugation on $\MUg(1)$ is induced 
by the (complex anti-linear) self map of the universal line 
bundle $\gamma_1$ over $\BUg(1)$ that sends a vector to its 
complex conjugate. This map can be seen as a (complex linear) 
isomorphism from $\overline{\gamma_1}$ to $\gamma_1$, 
where $\overline{\gamma_1}$ is the opposite complex 
structure on the real bundle underlying $\gamma_1$. 
Since $\overline{\gamma}_1$ is isomorphic to the dual 
bundle $\gamma_1^*$, we see that the action of complex 
conjugation on $\MUg(1)$ sends the Thom class $\mu_1 \in 
\MUg^2(\MUg(1))$ to the class $[-1]_{\MUg} (\mu_1)$. 
$\MUg(1)$ and $\BUg(1)$ are homotopy equivalent and the Thom
isomorphism is an $\MUg^*(\BUg(1))$ module map so
that $\mu_1^2 = \mu_1 \, x$, 
where $x = c_1(\gamma_1)$. 

From this we have $\mu_1^k = \mu_1 x^{k-1}$, so any power series  $\sum a_i \mu_1^{i+1}$
can be rewriten as $\mu_1 \, \sum a_i x^i$.
Hence 
$$[-1]_{\MUg} (\mu_1) = \mu_1 \frac{[-1]_{\MUg} (x)}{x}.$$

Incorporating this observation into the $\Z/2$ 
equivariance of $\mu_1$ from Equation \eqref{leftright}
by computing on the left and the right, this
translates to the equality: 
$$
c(\mu_1) = \mu_1 \frac{[-1]_{\MUg} (x)}{-x} 
= 
 \mu_1 \, \psi(x). 
$$
\end{proof} 

\noindent

\textbf{Outline of proof:} (of Theorem \ref{main}) Before beginning the proof of Theorem \ref{main} in earnest, we give a brief overview. We will make significant use of the Bockstein spectral sequence constructed from the fibration in \cite[Theorem 1.6]{KW07a}. Though the results in \cite{KW07a} are stated for $ER(n)$, since the class $v_n$ has been inverted, they will apply verbatim to $MR(n)$. In particular, the proof of \cite[Theorem 1.6]{KW07a} shows that there is a Bockstein spectral sequence $E_r(\MO[4])$, starting with the $\MUg[v_n^{-1}]$-cohomology of the spectrum $\MO[4]$ and converging to the $MR(n)$-cohomology of $\MO[4]$. The proof of Theorem \ref{main} is somewhat technical and assumes familiarity with the properties of this spectral sequence, which may be found summarized in \cite[Theorem 2.1]{KW14}. In particular, note that the spectral sequence collapses at the $E_{2^{n+1}}$ page. The final part of the proof will make use of the structure of the spectral sequence for a point and for $\BOg$. These are described in Section 3 and 5 of \cite{KW14}, respectively.

The orientation $u_{n+1}$ will be constructed inductively, starting with a class $u_2 \in \MUg[v_n^{-1}]^*(\MO[4])$. Once the class $u_2$ is constructed, we will inductively define $u_{k+1}$ in terms of $u_k$ (see Equation \ref{ind} below) and begin our analysis of the Bockstein spectral sequence differentials. We will show that for $1<k<n+1$ that the class $u_k$ survives to the $E_{2^k-1}$ page and that $E_{2^k-1}(\MO[2^k])$ is a rank one free module over the $E_{2^k-1}(\BOg)$ (the $E_{2^k-1}$ page of the spectral sequence for $\BOg$) on the distinguished generator $u_k$. Continuing in this way, we will conclude that $u_{n+1}$ survives to the last stage $E_{2^{n+1}-1}$ where there is one last possible differential left in the spectral sequence. We will show that this differential on $u_{n+1}$ must be zero, and thus $MR(n)^*(\MO[2^{n+1}])$ is a rank one free module over $MR(n)^*(BO)$, which will establish the theorem.

\medskip
\noindent
\begin{proof}  (of Theorem \ref{main}) We start by defining a class $u_2$. Let $\zeta$ denote the universal real vector bundle over $\BOg$, $\zeta_{\CC}$ the universal complex vector bundle over $\BUg$, and $\overline{\zeta_{\CC}}$ its conjugate. Consider the composite
$$\xymatrix{\BUg \ar@{->}[r]^-{\Delta} & \BUg \times \BUg \ar@{->}[r] & \BUg}$$
classifying the bundle $\zeta_{\CC} \oplus \overline{\zeta_{\CC}}$ over $\BUg$. Precomposing with the complexification map $\BOg \longrightarrow \BUg$, we have that $\zeta_{\CC} \oplus \overline{\zeta_{\CC}}$ pulls back to $4\zeta$. Taking Thom spectra and mapping into $\MUg[v_n^{-1}]$ yields the composite
$$\MO[4] \longrightarrow \MUg \wedge \MUg \longrightarrow  \MUg[v_n^{-1}]$$
where the second map is induced by the twisted multiplication map (the counit of the norm-forgetful adjunction)
\[ m \circ (id \wedge c) : \MUg \wedge \MUg \longrightarrow \MUg. \]
This is an $\MUg[v_n^{-1}]$-Thom class for $4\zeta$, and we define it to be $u_2(4\zeta)$.
We have
$$u_2(4\zeta)=\mu((\zeta \otimes \CC) \oplus (\overline{\zeta \otimes \CC}))=\mu(\zeta \otimes \CC)\mu(\overline{\zeta \otimes \CC})$$
Reasoning from line bundles  using Lemma \ref{action} and applying the splitting principle shows that 
$$\mu(\overline{\zeta \otimes \CC})=\mu(\zeta \otimes \CC)\psi(\zeta \otimes \CC)$$
It follows that 
\[ u_{2}(4 \zeta) = \mu(2 \zeta \otimes \CC) \cup \psi(\zeta \otimes \CC)). \]
and that $u_2$ is $c$-invariant.


We now begin our investigation of the Bockstein spectral sequence. Since $u_2$ is $c$-invariant, it follows that $d_1(u_2)=0$ in the Bockstein spectral sequence converging to $\MUR(n)^*(\MO[4])$. We also have $d_2(u_2)=0$ for degree reasons, so that $d_r(u_2)=0$ for $r<3$ will begin our induction. If some Thom class $u_r$ (to be defined inductively in Equation \ref{ind} below) survives to $E_{r+1}$, notice that $\mbox{E}_{r+1}$ for the Thom space is a rank one free module over $\mbox{E}_{r+1}(\BOg)$ on the Thom class. We will show for $1<k<n+1$ that $E_{2^k-1}(\MO[2^k])$ is a rank one free module over $E_{2^k-1}(\BOg)$ on a distinguished generator $u_k$. 

By induction, assume this is true for $k>1$. Writing $2^{k+1}\zeta$ as $2(2^k\zeta)$, we have a factorization:
\[ [2^{k+1}] = ([2^k] + [2^k]) \circ \Delta : 
\BOg \longrightarrow \BOg \times \BOg \longrightarrow \BOg, \]
which induces a map of Thom spectra:
\[ \tau = m \circ \Delta : \MO[2^{k+1}] \longrightarrow \MO[2^k] 
\wedge \MO[2^k] \longrightarrow \MO[2^k]. \]
This induces a map of spectral sequences $\tau^\ast : 
\mbox{E}_{2^{k}-1}(\MO[2^k]) \longrightarrow 
\mbox{E}_{2^k-1}(\MO[2^{k+1}])$. 
Define 
\begin{equation}\label{ind}u_{k+1}:=\tau^\ast(u_k).\end{equation}
Observe that since $c(u_{k+1})=u_{k+1}$ by naturality and the fact that $c(u_k)=u_k$ by induction (the observation that $u_2$ is $c$-invariant above started the induction). By the properties of the Bockstein spectral sequence, we have: 
\[ d_{2^k-1}(u_{k+1}) = \Delta^\ast \mu^\ast 
d_{2^{k}-1}(u_k) = \Delta^\ast d_{2^{k}-1} \mu^\ast (u_k) = 
\Delta^\ast (d_{2^{k}-1}(u_k) \wedge u_k + u_k 
\wedge d_{2^{k}-1}(u_k)). \]
But notice that $\Delta : \MO[2^{k+1}] 
\longrightarrow \MO[2^k] \wedge \MO[2^k]$ is invariant 
under the swap map on $\MO[2^k] \wedge \MO[2^k]$. 
Therefore $\Delta^\ast (d_{2^{k}-1}(u_k) \wedge u_k) = 
\Delta^\ast (u_k \wedge d_{2^{k}-1}(u_k))$. 
It follows that $d_{2^{k}-1}(u_{k+1})$ is a 
multiple of $2$ and must consequently be zero 
since $\mbox{E}_{2^{k}-1}(\MO[2^{k+1}])$ is a $\Z/2$-module 
for external degrees greater than zero \footnote{Proposition \ref{bound} below shows that $d_{2^k-1}(u_k)$ is in fact non-trivial for $k \leq n$}. It follows that $u_{k+1}$ survives 
to $\mbox{E}_{2^{k}}(\MO[2^{k+1}])$. 

\medskip
\noindent
We now need to show that $d_{2^{k}+r}(u_{k+1})=0$ by induction
on $r$, for $0 \le r < 2^{k}-1$. Here we use a degree argument that relies on the structure of the Bockstein spectral sequence as described in Section 5 of \cite{KW14} and we import our notation from there, in particular the `hatted' classes below. We write the Bockstein spectral sequence for $\BOg$ as a tensor product of a ring of permanent cycles and the Bockstein spectral sequence for a point, as in \cite{KW14}:
$$E_r(\BOg)=\hat{\MUg}[\hat{v}_n^{-1}]^*(\BOg) \otimes_{\hat{\MUg}[\hat{v}_n^{-1}]^*} E_r(\text{pt})$$
By construction, all classes in $\hat{\MUg}[v_n^{-1}]^\ast(\BOg)$ have internal 
degree divisible by $2^{n+2}$. On the other hand, the differentials 
longer than $d_{2^k-1}$ have external degree 
larger than $2^k-1$, and hence represent elements divisible by $x^{2^k-1}$. 

 Using the structure of the spectral sequence for a point, the domain of these differentials is generated by the 
classes $ y, \hat{v}_{i,l}:=\hat{v}_iv_n^{l2^{i+1}}$ and $v_n^{\pm 2^k}$ for $i > k-1$, where $y$ is the permanent cycle representing the nilpotent class $x$. All of these classes have internal degree divisible by $2^{k+1}$. 
Therefore, for dimensional reasons, there can be no 
differentials in this spectral sequence that land in internal 
degree between $2^k$ and $2^{k+1}$, until we reach the 
differential $d_{2^{k+1}-1}$. 

\medskip
\noindent
Continuing in this way, we notice that $u_{n+1}$ survives until the last stage $\mbox{E}_{2^{n+1}-1}(\MO[2^{n+1}])$. Now consider $d_{2^{n+1}-1}(u_{n+1})$ in degree $1+(2^{n+1}-1) \lambda$. The image of this differential lands inside a subquotient of the group $\MUg[v_n^{-1}]^*(\MO[2^{n+1}])$, which we henceforth identify (using the Thom isomorphism) with the group $u_{n+1} \MUg[v_n^{-1}]^*(\BOg)$. Furthermore, classes that have survived past $\mbox{E}_{2^n-1}$ must belong to the $\Z/2[\hat{v}_1, \ldots, \hat{v}_{n-1}, v_n^{\pm 2^n}]$-submodule of $u_{n+1} \MUg[v_n^{-1}]^*(\BOg)$ generated by $u_{n+1} \hat{\MUg}[v_n^{-1}](\BOg)$, modulo previous differentials. This allows us to express $d_{2^{n+1}-1}(u_{n+1})$ as: 
\[ d_{2^{n+1}-1}(u_{n+1}) = v_n^{2^n} u_{n+1} \, w, \quad \quad  \text{where} \quad \quad w \in \frac{\MUg[v_n^{-1}]^*(\BOg)}{\langle \hat{v}_0, \ldots, \hat{v}_{n-1} \rangle},  \]
for some permanent cycle $w$. Furthermore, we know that $d_{2^{n+1}-1}^2 (u_{n+1}) = 0$. Applying this to the above expression and using the derivation property, we see that: 
\[ v_n^{2^{n+1}} w^2 = v_n^{2^{n+1}-2^{2n}} \hat{v}_n w. \]
Replacing $w$ with $v_n^{-2^{2n}} \hat{w}$ for some (unique) element $\hat{w} \in \MUg[v_n^{-1}]^{(1-2^n)(1-\lambda)}(\BOg)$, we obtain the relation $\hat{w}^2 = \hat{v}_n \hat{w}$. This implies that $\hat{w} (1- \hat{v}_n^{-1} \, \hat{w}) = 0$. Since $(1-\hat{v}_n^{-1}\hat{w})$ is a unit, we see that $\hat{w} = 0$. In other words, $u_{n+1}$ survives the differential $d_{2^{n+1}-1}$. The proof of Theorem \ref{main} is complete on observing that the 
spectral sequence collapses at $\mbox{E}_{2^{n+1}}$ (by \cite[Theorem 2.1(iv)]{KW14}.
\end{proof}

\medskip
\noindent
The following proposition shows that we cannot expect to do better: 

\medskip
\begin{prop} \label{bound}
There exists a vector bundle $\zeta$ such that $2^n \zeta$ is not $ER(n)$-orientable. In particular, the differential $d_{2^k-1}$ in the Bockstein spectral sequence converging to $\MUR(n)^*(\MO[2^k])$ is nontrivial on the generator $u_k$ for $k \leq n$. 
\end{prop}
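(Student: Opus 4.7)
The plan is to exhibit an explicit bundle $\zeta$ for which $2^n\zeta$ is not $ER(n)$-orientable, and then to deduce non-triviality of the universal differentials $d_{2^k-1}(u_k)$ for $k \leq n$ by naturality.

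First I would take $\zeta = \gamma$, the tautological real line bundle over $\mathbb{R}P^\infty$ (or a sufficiently large finite skeleton). For $n = 1$ the claim reduces to the classical fact that $2\gamma$ fails to admit a $\Spin$ structure, since $w_2(2\gamma) = w_1(\gamma)^2 \neq 0$ and $ER(1) = KO_{(2)}$ orients precisely the Spin bundles. For general $n$, the $ER(n)$-cohomology of $\mathbb{R}P^\infty$ has been computed in \cite{KW08a, KW08b, KLW16a}, and I would write the candidate Thom class of $2^n\gamma$ via the splitting principle as in Theorem \ref{main} (using the $\psi$-series from Lemma \ref{action}). One then checks that its lift to $ER(n)$-cohomology is obstructed by a nonzero Bockstein at the $d_{2^n-1}$-stage, with the obstruction detected in the known basis of $ER(n)^\ast(\mathbb{R}P^\infty)$.

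Second, for the differential claim I would use naturality. Let $f : \mathbb{R}P^\infty \longrightarrow \BOg$ classify $\gamma$; the induced map of Thom spectra $\mathrm{Th}(2^k f) \longrightarrow \MO[2^k]$ produces a map of Bockstein spectral sequences sending $u_k$ to the Thom class of $2^k\gamma$. If $d_{2^n-1}(u_n)$ were zero universally, the degree-counting argument from the proof of Theorem \ref{main} (higher differentials have internal degree divisible by $2^{n+1}$, incompatible with the degree of $u_n$) would force $u_n$ to be a permanent cycle, implying $ER(n)$-orientability of $2^n\gamma$ and contradicting the first step. For $k < n$, the same argument applies after passing through the tower $\MO[2^{k+1}] \longrightarrow \MO[2^k]$: if $d_{2^k-1}(u_k) = 0$, then by the construction $u_{k+1} = \tau^\ast(u_k)$ in the proof of Theorem \ref{main} the obstruction is propagated upward, and iterating yields orientability of $2^n\gamma$, a contradiction.

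The main obstacle is the explicit identification of the obstruction in the Bockstein spectral sequence for $\mathbb{R}P^\infty$. This calls for combining the $\psi$-series from Lemma \ref{action} with the formal group law expansion of $[-1]_{\MUg}(x) = -x + \cdots$ to write down $d_{2^n-1}$ applied to the putative Thom class, and then using the computed basis of $ER(n)^\ast(\mathbb{R}P^\infty)$ to verify non-triviality. The $n=1$ case serves as a model, and the existing computations in \cite{KW08a,KW08b,KLW16a} supply the structural information needed to identify the obstruction as a specific nonzero element.
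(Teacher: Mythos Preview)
Your choice of bundle is the same as the paper's: the tautological line over $B\Z/2 = \mathbb{R}P^\infty$ (the paper writes it as the virtual bundle $\alpha - 1$, but orientability is insensitive to adding trivial summands). The deduction of the differential statement from non-orientability is also essentially identical to the paper's: if $d_{2^k-1}(u_k)=0$ for some $k\le n$, then the inductive machinery of Theorem~\ref{main} runs one step earlier than intended and produces an $\MUR(n)$-orientation of $\MO[2^m]$ for some $m\le n$, contradicting the first step.

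Where you diverge is in the proof of non-orientability itself, and this is exactly the step you flag as the ``main obstacle'' and do not carry out. You propose to locate the obstruction as a nonzero Bockstein differential on the candidate Thom class in $ER(n)^\ast(\mathbb{R}P^\infty)$, using the $\psi$-series and the cited computations. This is plausible but laborious, and as written it is only a plan: you have not identified the target of $d_{2^n-1}$ or verified it is nonzero for general $n$. The paper sidesteps this computation entirely with an equivariant argument. Writing $\mathrm{Th}(2^n\zeta) = E\Z/2_+ \wedge_{\Z/2} S^{2^n(\alpha-1)}$, a putative $\MUR(n)$-Thom class, after postcomposing along $\MUR(n)\to\MU[v_n^{-1}]$ and passing to the adjoint, would yield a \emph{unit} in $\pi^{C_2}_{2^n(1-\alpha)}\MU[v_n^{-1}]$. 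The Hu--Kriz computation of $\pi^{C_2}_\star\MU[v_n^{-1}]$ shows no such unit exists. This is a one-line appeal to known equivariant coefficients rather than a fresh Bockstein analysis.

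So your outline is not wrong, but it leaves precisely the hard step undone, and the paper's route shows that step is avoidable. If you want to complete your version, you would need to actually exhibit the nonzero class hit by $d_{2^n-1}$ on the Thom class of $2^n\gamma$; alternatively, adopt the equivariant adjunction argument, which closes the gap cleanly.
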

\begin{proof}
Note that if $d_{2^k-1}$ was trivial on $u_k$ for some $k \leq n$, the proof of the above theorem would show that $\MO[2^m]$ was $\MUR(n)$-orientable for some $m \leq n$. Let us demonstrate a contradiction under that hypothesis by showing the existence of a vector bundle $\zeta$ such that $2^n \zeta$ is not $ER(n)$-orientable. Let $S^{\alpha}$ denote the one point compactification of the sign representation of $\Z/2$. Consider the virtual vector bundle $\zeta$ over $B\Z/2$ with Thom space given by: 
\[ Th(\zeta) = E\Z/2_+ \wedge_{\Z/2} S^{(\alpha-1)}. \] 
If $2^n\zeta$ were to admit an $\MUR(n)$-orientation, then one would obtain a map representing the Thom class $\mu$:
\[ \mu: E\Z/2_+ \wedge_{\Z/2} S^{2^n(\alpha-1)} \longrightarrow MR(n).\]
Postcomposing with the inclusion of fixed points map $MR(n) \longrightarrow \MU[v_n^{-1}]$ (where we view $MR(n)$ as a $C_2$-spectrum with trivial action) and precomposing with the map to the orbits, we have the composite
$$E\Z/2_+ \wedge S^{2^n(\alpha -1)} \longrightarrow E\Z/2_+ \wedge_{\Z/2} S^{2^n(\alpha-1)} \longrightarrow MR(n) \longrightarrow \MU[v_n^{-1}].$$
which is a $C_2$-equivariant map (with trivial actions on the middle two terms). Taking adjoints, we obtain a map
$$S^{2^n(\alpha -1)} \longrightarrow F(E\Z/2_+, \MU[v_n^{-1}])\simeq \MU[v_n^{-1}],$$
where we have used the fact that $\MU[v_n^{-1}]$ is cofree. Since it comes from the Thom class, this map must represent a unit in $\pi_{2^n(\alpha -1)}^{\Z/2}\MU[v_n^{-1}]$. However, the computation of the bigraded homotopy of $\MU[v_n^{-1}]$ given in \cite{HK01} shows that there is no such class. Hence we obtain a contradiction to the existence of $\mu$. 
\end{proof}

\medskip
\begin{remark}
As mentioned earlier, Theorem \ref{main} descends to an $ER(n)$-orientation for $\MO[2^{n+1}]$. However, this is by no means optimal. 
For example, for $n=1$ we know that $ER(1)$ is 2-localized real 
$K$-theory. Hence a bundle $\xi$ is $ER(1)$-orientable if and 
only if it is {\em Spin}. This is equivalent to $w_1(\xi) = w_2(\xi) = 0$. 
This holds for bundles of the form $\xi = 4\zeta$, 
but clearly there are Spin bundles that are not divisible by $4$. Similarly, for $ER(2)$, the results of \cite{KS} suggest that a bundle $\xi$ is $ER(2)$-orientable if and only if $w_1(\xi) = w_2(\xi) = w_4(\xi) = 0$, which is clearly true for bundles of the form $\xi = 8\zeta$. It is a compelling question to find a nice answer in 
general for when a bundle is $ER(n)$-orientable, or even to 
show that an answer to this question may be given in closed form. 
\end{remark}

\medskip
\begin{remark} \label{error}
The above Theorem \ref{main} corrects an error given in the proof of \cite[Theorem 1.4]{KW14}.  The induction process for the construction of $u_{n+1}$ in \cite{KW14} began with a class in the Bockstein spectral sequence converging to $ER(n)^\ast(\MO[2])$. Unfortunately, that class turns out to not be conjugation invariant as required. Our current argument starts with a manifestly invariant class in the Bockstein spectral sequence converging to $ER(n)^*(\MO[4])$ and use that to generate the other permanent cycles. The rest of the argument is essentially the same as in \cite{KW14}. 
\end{remark}

\bibliography{ernbib}{}

\begin{thebibliography}{KLW17}

\bibitem[AM78]{AM78}
Sh{\^o}r{\^o} Araki and Mitutaka Murayama.
\newblock {$\tau $}-cohomology theories.
\newblock {\em Japan. J. Math. (N.S.)}, 4(2):363--416, 1978.

\bibitem[Ban13]{Ban13}
Romie Banerjee.
\newblock On the {$ER(2)$}-cohomology of some odd-dimensional projective
  spaces.
\newblock {\em Topology Appl.}, 160(12):1395--1405, 2013.

\bibitem[Fuj76]{Fuj}
Michikazu Fujii.
\newblock Cobordism theory with reality.
\newblock {\em Math. J. Okayama Univ.}, 18(2):171--188, 1975/76.

\bibitem[HH14]{HH13}
M.~A. Hill and M.~J. Hopkins.
\newblock Equivariant multiplicative closure.
\newblock In {\em Algebraic topology: applications and new directions}, volume
  620 of {\em Contemp. Math.}, pages 183--199. Amer. Math. Soc., Providence,
  RI, 2014.

\bibitem[HHR16]{HHR16}
M.~A. Hill, M.~J. Hopkins, and D.~C. Ravenel.
\newblock On the nonexistence of elements of {K}ervaire invariant one.
\newblock {\em Ann. of Math. (2)}, 184(1):1--262, 2016.

\bibitem[HK01]{HK01}
Po~Hu and Igor Kriz.
\newblock Real-oriented homotopy theory and an analogue of the
  {A}dams-{N}ovikov spectral sequence.
\newblock {\em Topology}, 40(2):317--399, 2001.

\bibitem[HM17]{HM16}
Michael Hill and Lennart Meier.
\newblock The {C}2--spectrum {T}mf1(3) and its invertible modules.
\newblock {\em Algebr. Geom. Topol.}, 17(4):1953--2011, 2017.

\bibitem[HS17]{HS17}
Jeremy Hahn and Xiaolin~Danny Shi.
\newblock Real orientations of {M}orava {$E$}-theories.
\newblock {\em arXiv:1707.03413}, 2017.

\bibitem[JW73]{JW73}
David~Copeland Johnson and W.~Stephen Wilson.
\newblock Projective dimension and {B}rown-{P}eterson homology.
\newblock {\em Topology}, 12:327--353, 1973.

\bibitem[KLW16]{KLW16a}
Nitu Kitchloo, Vitaly Lorman, and W.~Stephen Wilson.
\newblock {Landweber flat real pairs and {$ER(n)$}-cohomology}.
\newblock arXiv:1603.06865, 2016.

\bibitem[KLW17]{KLW16b}
Nitu Kitchloo, Vitaly Lorman, and W.~Stephen Wilson.
\newblock {The {$ER(2)$}-cohomology of {$B\ZZ/(2^q)$} and $\mathbb{C}P^n$}.
\newblock {\em Canadian Journal of Mathematics}, 2017.

\bibitem[KS04]{KS}
Igor Kriz and Hisham Sati.
\newblock M-theory, type {IIA} superstrings, and elliptic cohomology.
\newblock {\em Adv. Theor. Math. Phys.}, 8(2):345--394, 2004.

\bibitem[KW07a]{KW07a}
Nitu Kitchloo and W.~Stephen Wilson.
\newblock On fibrations related to real spectra.
\newblock In {\em Proceedings of the {N}ishida {F}est ({K}inosaki 2003)},
  volume~10 of {\em Geom. Topol. Monogr.}, pages 237--244. Geom. Topol. Publ.,
  Coventry, 2007.

\bibitem[KW07b]{KW07b}
Nitu Kitchloo and W.~Stephen Wilson.
\newblock On the {H}opf ring for {$ER(n)$}.
\newblock {\em Topology Appl.}, 154(8):1608--1640, 2007.

\bibitem[KW08a]{KW08a}
Nitu Kitchloo and W.~Stephen Wilson.
\newblock The second real {J}ohnson-{W}ilson theory and nonimmersions of
  {$RP^n$}.
\newblock {\em Homology, Homotopy Appl.}, 10(3):223--268, 2008.

\bibitem[KW08b]{KW08b}
Nitu Kitchloo and W.~Stephen Wilson.
\newblock The second real {J}ohnson-{W}ilson theory and nonimmersions of
  {$RP^n$}. {II}.
\newblock {\em Homology, Homotopy Appl.}, 10(3):269--290, 2008.

\bibitem[KW13]{KW13}
Nitu Kitchloo and W.~Stephen Wilson.
\newblock Unstable splittings for real spectra.
\newblock {\em Algebr. Geom. Topol.}, 13(2):1053--1070, 2013.

\bibitem[KW14]{KW14}
Nitu Kitchloo and W.~Stephen Wilson.
\newblock {The {$ER(n)$}-cohomology of {$BO(q)$}, and Real Johnson-Wilson
  orientations for vector bundles}.
\newblock {\em arXiv:1409.1281}, 2014.

\bibitem[Lan68]{Lan68}
Peter~S. Landweber.
\newblock Conjugations on complex manifolds and equivariant homotopy of {$MU$}.
\newblock {\em Bull. Amer. Math. Soc.}, 74:271--274, 1968.

\bibitem[Lor16]{Lor16}
Vitaly Lorman.
\newblock The {R}eal {J}ohnson--{W}ilson cohomology of {$\Bbb{CP}^\infty$}.
\newblock {\em Topology Appl.}, 209:367--388, 2016.

\bibitem[May96]{May96}
J.~P. May.
\newblock {\em Equivariant homotopy and cohomology theory}, volume~91 of {\em
  CBMS Regional Conference Series in Mathematics}.
\newblock Published for the Conference Board of the Mathematical Sciences,
  Washington, DC; by the American Mathematical Society, Providence, RI, 1996.
\newblock With contributions by M. Cole, G. Comeza{\~n}a, S. Costenoble, A. D.
  Elmendorf, J. P. C. Greenlees, L. G. Lewis, Jr., R. J. Piacenza, G.
  Triantafillou, and S. Waner.

\bibitem[Sch]{Sch15}
Stefan Schwede.
\newblock Lectures on equivariant stable homotopy theory.
\newblock Available at
  \url{http://www.math.uni-bonn.de/people/schwede/equivariant.pdf}.

\end{thebibliography}
\bibliographystyle{alpha}
\end{document}